\newcommand{\cd}[1]{\begin{equation*}{\xymatrix{#1}}\end{equation*}}
\def\co{\colon\thinspace}
\def\Z{\mathbb{Z}}
\newtheorem{theorem}{Theorem}[section]
\newtheorem{lemma}[theorem]{Lemma}
\newtheorem{corollary}[theorem]{Corollary}
\newtheorem{proposition}[theorem]{Proposition}
\newtheorem{question}[theorem]{Question}
\theoremstyle{definition}
\newtheorem{remark}[theorem]{Remark}
\newtheorem{definition}[theorem]{Definition}
\newtheorem{example}[theorem]{Example}
\newcommand{\underw}{{\underline{w}}}
\newcommand{\onto}{\twoheadrightarrow}
\newcommand{\scl}{\ensuremath{\mathrm{scl}}}
\begin{document}

\date{1 November 2009}

\author[J. F. Manning]{Jason Fox Manning}
\address{University at Buffalo, SUNY}
\email{j399m@buffalo.edu}

\title{Virtually geometric words and Whitehead's algorithm}

\begin{abstract}
Motivated by a question of Gordon and Wilton, we consider the question
of which collections of words are ``virtually geometric''.  In
particular, we prove that some words (e.g. $bbaaccabc$) are not
virtually geometric.
\end{abstract}

\maketitle

\section{Definitions and introduction}
Let $G$ be a group, $\underline{g}=\{[g_i]\}_{i\in I}$ a collection of conjugacy classes
in $G$, and $X$ some space.  Any homomorphism $\phi\co G\to \pi_1(X)$
induces a map, 
\[ \phi_{\underline{g}}\co \bigsqcup_{i\in I}{S^1}_i\to X, \]
well-defined up to homotopy, which sends the circle ${S^1}_i$ to a
loop in $X$ freely homotopic to the element $\phi(g_i)$.

\begin{definition}\label{d:geometric}
Let $F_n=\langle x_1,\ldots,x_n\rangle$ be the free group of rank $n$,
and let $H$ be a $3$--dimensional
handlebody of genus $n$.
We say a collection of conjugacy classes 
$\underw=\{[w_1],\ldots,[w_r]\}$ in $F$ is
\emph{geometric} if for some isomorphism 
\[ \phi\co F_n\to \pi_1(H) \]
the induced map $\phi_{\underw}$ is homotopic in $H$ to an
embedding 
\[\phi_{\underw}'\co \bigsqcup_{i=1}^r{S^1}_i\hookrightarrow \partial H. \]
In this case we also say that the finite presentation
\[ \langle x_1,\ldots,x_n\mid w_1,\ldots w_r\rangle \]
is \emph{geometric}.
\end{definition}
\begin{remark}
  Let $\phi\co F_n\to \pi_1(H)$ be an isomorphism as in Definition \ref{d:geometric}.
  Since any automorphism of a free group $F_n$ can be realized as
  a homeomorphism of the handlebody $H$, the phrase ``for some
  isomorphism'' can be replaced by ``for any isomorphism''.  (To see
  that any automorphism can be realized, first realize elementary
  Nielsen transformations 
\[\alpha_i\co \quad x_i\mapsto x_i^{-1}\mbox{, }x_j\mapsto
  x_j,\forall j\neq i,\] 
  and \[\beta_{ij} \co \quad x_i \mapsto x_ix_j\mbox{, }x_k\mapsto x_k,\forall
  k\neq i,\] and
  then use the fact that these transformations generate
  $\mathrm{Aut}(F_n)$, \cite[I.4]{LS}.)
\end{remark}
\begin{remark}
  A geometric presentation $\langle x_1,\ldots,x_n\mid w_1,\ldots
  w_r\rangle$ gives a (not necessarily 
  unique) way to build a $3$--manifold $M$ with $\pi_1(M) = \langle x_1,\ldots,x_n\mid w_1,\ldots
  w_r\rangle$, 
  by attaching $2$--handles to
  the boundary of the handlebody $H$, along attaching curves 
  $\phi_{\underw}'({S^1}_i)$ for $1\leq i\leq r$.  Thus every
  geometric presentation 
  is a presentation of the fundamental group of some $3$--manifold. 

  Moreover, the \emph{double} $D_n(\underw)$
  \[ \left.\left\langle \begin{array}{c}
                  x_1,\ldots,x_n,\\
                  x_1',\ldots,x_n',\\
                  t_2,\ldots,t_r\end{array}\right|
  \begin{array}{c}
    w_1(x_1,\ldots,x_n)=w_1(x_1',\ldots,x_n'),\\
    t_i(w_i(x_1,\ldots,x_n))t_i^{-1}=w_i(x_1',\ldots,x_n'), i\geq 2
  \end{array}
  \right\rangle \]
  is the fundamental group of a $3$--manifold built by 
  attaching thickened annuli to two copies of the handlebody $H$
  of genus $n$, along the attaching curves given by $\phi_\underw'$.
\end{remark}
\begin{definition}
  Let $\underw=\{[w_1],\ldots,[w_r]\}$ be a collection of
  conjugacy classes in the free group $F = F_n$.  For $F'<F$ a finite
  index subgroup, we define the \emph{lifts of $\underw$} to be the
  following set of $F'$--conjugacy classes:
\[\underw_{F'} = \left\{[gw^{n(g)}g^{-1}]\ \left|\ %
\begin{array}{c}  g\in
F,\ w\in\underw,\mbox{ and }
\\n(g)\in\Z_{>0} \mbox{ minimal so that }
gw^{n(g)}g^{-1}\in F'
\end{array}
\right\}\right. \]
  The collection $\underw$ is \emph{virtually
    geometric} if $\underw_{F'}$ is geometric for some finite
  index $F'<F$.  In this case, we say the presentation 
\[ \langle x_1,\ldots,x_n\mid w_1,\ldots w_r \rangle \]
  is \emph{virtually geometric}.
\end{definition}
Here is an equivalent topological
formulation:  Realize $\underw$ as an embedded
collection of circles $N_{\underw}$ in $H$.  If, for some finite
cover $\tilde{H}\stackrel{p}{\to} H$, the inclusion
\[\tilde\iota\co p^{-1}(N_{\underw})\to \tilde{H} \]
is homotopic to an embedding into 
$\partial\tilde{H}$, then $\underw$ is virtually geometric.
\begin{remark}\label{r:vg3mfd}
  If a presentation $\langle x_1,\ldots,x_n\mid w_1,\ldots w_r
  \rangle$ is virtually geometric, then
   there are positive integers
  $k_1,\ldots,k_r$ so that
 $$ \langle x_1,\ldots,x_n\mid
  w_1^{k_1},\ldots w_r^{k_r} \rangle$$ 
  is virtually a $3$--manifold
  group.
  Indeed, realizing $\underw=\{w_1,\ldots,w_r\}$ as a collection of
  embedded loops $N_{\underw}$ in a genus $n$ handlebody $H$, let
  $\tilde{H}\stackrel{p}{\longrightarrow} H$ be a \emph{regular} finite
  cover in which the preimage $\pi^{-1}(N_{\underw})$ is homotopic to
  an embedding in the boundary.  For $i\in \{1,\ldots,r\}$ let
  $\gamma_i\subset H$ be the loop in $N_{\underw}$ corresponding to
  $w_i$.  Each component of $p^{-1}(\gamma_i)$ covers $\gamma_i$ with
  the same degree;  let $k_i$ be this degree.
  We construct a complex $\tilde{K}$ from $\tilde{H}$ by attaching
  $k_i$ disks to each component of $p^{-1}(\gamma_i)$, for each $i$.
  Since $p^{-1}(N_{\underw})$ is homotopic to an embedding into
  $\partial \tilde{H}$, 
  the complex $\tilde{K}$ is homotopy
  equivalent to a $3$--manifold.  Moreover, the covering map
  $\tilde{H}\to H$ extends to a covering map $\tilde{K}\to K$ where
  $K$ is a complex with   $\pi_1(K) = \langle x_1,\ldots,x_n\mid w_1^{k_1},\ldots w_r^{k_r}
  \rangle$.\footnote{Actually, one can use the Orbifold Theorem
    \cite{BLP,CHK} together with a theorem of McCullough and Miller
    \cite{MM} to show that 
 $$ \langle x_1,\ldots,x_n\mid
  w_1^{N},\ldots w_r^{N} \rangle$$ 
  is a virtual $3$--manifold group for some positive integer $N$, but the author does not know an elementary
    argument.}

  (An aside:  It is possible to show 
  that every finitely
  presented group admits a  presentation which is virtually geometric, but the virtually
  geometric presentation will nearly always be different from the
  original one.  Briefly, one realizes the group as the fundamental
  group of a special polyhedron; a finite branched cover of this polyhedron
  with positive even-degree branching over the center of every face
  will be thickenable.  It follows that a presentation coming from the
  special polyhedron is virtually geometric.
  For more on  \emph{special 
    polyhedra} and \emph{thickenability}, see Chapter 1 of \cite{Matveev}.)

  The double $D_n(\underw)$ along a virtually geometric collection
  $\underw$ is also virtually the fundamental group of a $3$--manifold,
  obtained by joining together a pair of handlebodies by thickened
  annuli.
  A $3$--manifold
  argument based on Dehn's Lemma \cite[Lemma 20]{GWxx}
  then shows that if $D_n(\underw)$ is one-ended, it contains
  a closed surface subgroup.
\end{remark}
Gordon and Wilton gave some examples of virtually geometric but
non-geometric words in \cite{GWxx}, and asked the following:
\begin{question}\cite[Question 22]{GWxx}\label{q:main}
  Let $F$ be a free group of rank $n\geq 2$.  Is $\{[w]\}$ virtually
  geometric for every $w\in F$?
\end{question}
We give a negative answer to this question in Section \ref{s:answer},
by showing that the word $bbaaccabc\in \langle a,b,c\rangle$ is not
virtually geometric (See Corollary \ref{cor:k33}).

\begin{remark}
Danny Calegari (personal communication) pointed out a second motivation for studying
Question \ref{q:main}, connected with \emph{stable commutator length}
\cite{scl}. 
If a 
homologically trivial 
collection $\underw$ is geometric, then $\scl(\underw)$ is in
$\Z+\frac{1}{2}$, by \cite[Proposition 4.4]{scl}.  (This Proposition
is an application of Gabai's result \cite[6.18]{Ga} on the
proportionality of Thurston and Gromov norms on the second homology of
a $3$--manifold.)  
More generally, suppose $\underw$ is virtually geometric, so that
$\underw_{F'}$ is geometric for some $F'$ with $[F:F']<\infty$.
A straightforward argument then shows that
\[\scl(\underw)\in\frac{1}{[F:F']}(\Z+\frac{1}{2}).\]
Thus a positive answer to Question \ref{q:main} for words in $[F,F]$
would have given an
alternate proof of the rationality of $\scl$ in free groups, which is the main
theorem of \cite{calegari:pql}.  
In fact, the answer to Question \ref{q:main} is negative even in this
more restricted setting (see Corollary \ref{cor:comm}).
\end{remark}

\section{Whitehead graphs}\label{s:whitehead}
This section describes certain graphs which arise naturally from
collections of words together with disk systems in handlebodies.  The
material in this section is discussed in more detail in Berge \cite{Berge}.

In the graphs we consider,
multiple edges may connect a pair of vertices, but no edge joins a
vertex to itself.  The \emph{valence} of a vertex is the number of
incident edges (not the number of neighbors), and a graph will be said
to be \emph{regular} if every vertex has the same valence.

A collection of conjugacy classes in a free group
$F = \langle x_1,\ldots, x_n\rangle$ can also be thought
of as a collection of \emph{cyclic words}, i.e., freely and cyclically
reduced ``words'' $w_1,\ldots,w_r$ whose letters are indexed mod $l_i$
where $l_i$ is the length of $w_i$.  Given a collection
$\underw$ of cyclic words, one can form the 
\emph{Whitehead graph} $W(\underw)$ as follows:
\begin{enumerate}
\item The $2n$ vertices of $W(\underw)$ are the generators $x_1,\ldots,x_n$
  of $F$ and their inverses.
\item For each two-letter sequence $xy$ in some $w_i\in \underw$,
  attach an edge from $x^{-1}$ to $y$.  (If some $w_i = x$ has length
  one, attach an edge from $x^{-1}$ to $x$.)
\end{enumerate}
\begin{example}
  If $F$ is the free group $\langle a,b,c\rangle$, and $\underw =
  \{bbaaccabc\}$, then the Whitehead graph $W(\underw)$ is isomorphic
  to the complete bipartite graph $K_{3,3}$.  
\end{example}
\begin{remark}\label{r:wggeo}
Again, there is a topological interpretation:  
Realize the collection
$\underw$ as an embedded $1$--submanifold $N$ in the handlebody $H$.
There is a collection of disks $\mathcal{D}=\{d_1,\ldots,d_n\}$ in $H$
so that for each $i$ there is a loop in $H$ representing 
the generator $x_i$ which
intersects $d_i$ exactly once, and has empty intersection with $d_j$
for all $j\neq i$.  After homotoping $N$ to intersect
$\mathcal{D}$ minimally, split the handlebody along $\mathcal{D}$ to
give a ball with $2n$ disks $\mathcal{D}_{\pm}$ in the boundary, and
$N'$ a $1$--submanifold with boundary in $\mathcal{D}_\pm$.
The Whitehead graph can then be obtained as the quotient of
$\mathcal{D}_\pm\cup N'$ by crushing each disk to a point.
\end{remark}

Notice that if $N$ was homotopic to an embedding into $\partial H$
which still intersects $\mathcal{D}$ minimally,
then the Whitehead graph is planar.
As shown by an example of Berge \cite[Section
  12]{Berge}, the assumption that $N$ still intersects $\mathcal{D}$
minimally after being homotoped into $\partial H$ is necessary, even
for geometric $\underw$;  there are examples of
geometric collections of words with non-planar Whitehead graph.
However, we will see in Theorem \ref{t:planar} that if $\mathcal{D}$
is \emph{minimal} in a certain sense, 
and $\underw$ is geometric, then the Whitehead graph is
planar.  

A \emph{complete system of disks} for $H$ is a collection
$\mathcal{C}$ of properly embedded disks which is maximal with respect
to the condition that $H\smallsetminus \cup\mathcal{C}$ is connected.
The system $\mathcal{D}$ from Remark \ref{r:wggeo} is an example of a
complete system.  Different choices of disks determine
different choices of free generators for $\pi_1(H)$.  (Actually, there
are multiple complete disk systems corresponding to any choice of free
generators; these disk systems are related to one another by
homeomorphisms of $H$ which act trivially on $\pi_1(H)$.)

Any collection of disjoint properly embedded disks in $H$
(complete or not)
gives rise to a 
graph $W(\underw, \mathcal{C})$ in exactly the same manner as
described in Remark \ref{r:wggeo}.  
So long as $\mathcal{C}$ is a complete system, the graph
$W(\underw,\mathcal{C})$ always has $2n$ 
vertices, but the number of edges depends on the particular complete
system $\mathcal{C}$.
(The number of edges is the number of times $\mathcal{C}$ intersects
$\underw$, after $\underw$ has been homotoped to make this
intersection minimal.)
The complete system $\mathcal{C}$ is said to be
\emph{minimal} (with respect to $\underw$) if the number of edges is
minimal over all complete systems of disks for $H$.  

As explained in \cite{Berge}, the following can be deduced from a
result of Zieschang \cite[Theorem 1]{Zi}.
\begin{theorem}\cite[Section 11]{Berge}\label{t:planar}
  If $\mathcal{C}$ is minimal, and $\underw$ is geometric, then
  $W(\underw,\mathcal{C})$ is planar.
\end{theorem}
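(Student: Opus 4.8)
The plan is to reduce planarity to a single geometric statement: that for a \emph{minimal} complete disk system $\mathcal{C}$, the minimal-intersection representative of a geometric collection $\underw$ can be realized by an embedding into $\partial H$. Granting this, planarity is immediate from the cut-open construction of Remark \ref{r:wggeo}, so essentially all the work is in justifying the reduction, and that is where Zieschang's Theorem 1 \cite[Theorem 1]{Zi} (as exploited by Berge in \cite[Section 11]{Berge}) enters.

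First I would fix the cut-open picture. Cutting $H$ along $\mathcal{C}$ yields a ball $B$ whose boundary sphere $S$ contains the $2n$ disks $\mathcal{D}_\pm$ arising from the two sides of each disk of $\mathcal{C}$; the cut-open boundary becomes the planar subsurface $P = S \smallsetminus \mathrm{int}\,\mathcal{D}_\pm$, a sphere with $2n$ holes. By definition, $W(\underw,\mathcal{C})$ is built from a minimal-intersection representative $N$ of $\underw$ by cutting along $\mathcal{C}$ to produce a system of arcs $N'$ in $B$ with endpoints on $\partial\mathcal{D}_\pm$, and then crushing each disk $\mathcal{D}_\pm$ to a vertex. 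Here the number of edges equals the minimal intersection number $i(\underw,\mathcal{C}) = |N\cap\mathcal{C}|$, minimized over all homotopies of $N$ in $H$.

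Next I would dispatch the easy half: if the minimal representative $N$ can be taken embedded in $\partial H$ and transverse to $\mathcal{C}$, then $W(\underw,\mathcal{C})$ is planar. Indeed, the arcs $N'$ are then disjoint and lie in $P$ rather than in the interior of $B$; since $P$ embeds in $S \cong \R^2\cup\{\infty\}$, the disks $\mathcal{D}_\pm$ together with these disjoint arcs give a crossing-free picture, and crushing each disk to a point preserves disjointness, so the quotient embeds in $S^2$. This is precisely the observation recorded after Remark \ref{r:wggeo}, and it leaves only the task of arranging the hypothesis.

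The crux, and the sole place where geometricity and minimality of $\mathcal{C}$ are used, is to arrange exactly that. The obstacle is genuine: Berge's example \cite[Section 12]{Berge} shows that for a \emph{non}-minimal disk system an embedded boundary representative may be forced to meet $\mathcal{C}$ more than minimally, so the representative $N$ computing $W(\underw,\mathcal{C})$ need not be embeddable and the graph can be non-planar. What rescues the minimal case is Zieschang's Theorem 1: applied to a minimal complete system, it shows that a geometric collection's embedded representative already realizes $i(\underw,\mathcal{C})$. Heuristically, one starts with an embedded $N\subset\partial H$ representing $\underw$, removes all bigons between $N$ and $\partial\mathcal{C}$ by surface isotopy (keeping $N$ embedded), and must then upgrade this surface-level efficiency to three-dimensional minimality; the idea is that any remaining excess intersection with some disk $d\in\mathcal{C}$ could be traded, via a disk exchange producing a competing complete system, for fewer intersections, contradicting the minimality of $\mathcal{C}$. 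I expect the delicate point to be extracting precisely this ``surface-minimal equals three-dimensionally minimal'' conclusion from Zieschang's normal-form result, as it is exactly the coincidence that fails, by Berge's example, for non-minimal $\mathcal{C}$.
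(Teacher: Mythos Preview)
The paper does not supply its own proof of this theorem: it is stated as a citation to \cite[Section 11]{Berge}, prefaced only by the remark that it ``can be deduced from a result of Zieschang \cite[Theorem 1]{Zi}.'' Your proposal correctly reconstructs exactly this route---reducing planarity to the claim that, for a \emph{minimal} complete disk system, a boundary-embedded representative of a geometric $\underw$ already realizes the minimal intersection number with $\mathcal{C}$, and attributing that claim to Zieschang's theorem as processed by Berge---so there is nothing to compare; you have simply fleshed out what the paper leaves as a reference.

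One caution: your final paragraph is explicitly heuristic (``I expect the delicate point to be\ldots''), and that is honest, but if you intend this as a proof rather than a proof sketch you would need to actually state Zieschang's theorem and carry out the deduction. The bigon-removal-plus-disk-exchange picture you describe is the right intuition, but the content of Zieschang's result is precisely that this intuition can be made rigorous, and you have not done that work here---you have only pointed at where it lives, which is also all the paper does.
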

(In case the hypotheses of Theorem \ref{t:planar} are satisfied,
Berge \cite[Section 13]{Berge} explains further how to obtain a
\emph{Heegaard diagram}, and thus an explicit embedding of
$N_{\underw}$ in $\partial H$, from
$W(\underw,\mathcal{C})$.) 

The next theorem is explained in \cite[Section 8]{Berge}, and follows
from the ``peak reduction'' lemma of Higgins--Lyndon (see \cite{HL}
or \cite[I.4]{LS}, 
cf. \cite[Section 4]{CV} for an account with a different flavor).
\begin{theorem}\cite[Section 8]{Berge}\label{t:reducible}
  If $\mathcal{C}$ is not minimal, then $W(\underw,\mathcal{C})$ contains
  a pair of vertices $v_+$ and $v_-$ with the same valence $k$ and a
  collection of $s<k$ edges $\{e_1,\ldots,e_s\}$ which separate $v_+$
  from $v_-$.
\end{theorem}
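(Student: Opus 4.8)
The plan is to reduce the statement to a purely combinatorial fact about Whitehead graphs, together with Whitehead's length-change formula, feeding in peak reduction as a black box. First I would record the dictionary between disk systems and free bases: a complete system of disks $\mathcal{C}$ determines a free basis of $\pi_1(H)\cong F_n$, the number of edges of $W(\underw,\mathcal{C})$ equals the number of (minimized) intersections of $\underw$ with $\mathcal{C}$, and this in turn equals the total cyclic length of $\underw$ written in the corresponding basis. Changing $\mathcal{C}$ realizes an automorphism of $F_n$, so the edge count of $W(\underw,\mathcal{C})$ is exactly the length of $\underw$ in the relevant element of the $\mathrm{Aut}(F_n)$--orbit, and $\mathcal{C}$ is minimal precisely when this length is minimal over the orbit.

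Next I would invoke peak reduction (Higgins--Lyndon). Since $\mathcal{C}$ is not minimal, $\underw$ is not of minimal length in its orbit, so there is a Whitehead automorphism $\tau$ that strictly decreases the length. A first-kind (permutation) Whitehead automorphism preserves length, so $\tau$ must be of the second kind: $\tau=(A,a)$ with $a\in A$ and $a^{-1}\notin A$. I would then apply Whitehead's length-change formula,
\[ |\tau\cdot\underw| - |\underw| = E(A,\overline{A}) - \mathrm{val}(a), \]
where $E(A,\overline{A})$ counts the edges of $W(\underw,\mathcal{C})$ joining the vertex set $A$ to its complement $\overline{A}$. Because $\tau$ strictly reduces length, this forces $E(A,\overline{A})<\mathrm{val}(a)$.

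Now set $v_+=a$, $v_-=a^{-1}$, and $k=\mathrm{val}(a)$. The key observation is that in the Whitehead graph of a cyclic word every vertex has the same valence as its inverse: each occurrence of a letter $x$ in $\underw$ contributes exactly one incident edge at the vertex $x$ and exactly one at the vertex $x^{-1}$, and likewise for occurrences of $x^{-1}$, so $\mathrm{val}(x)=\mathrm{val}(x^{-1})$. Hence $v_+$ and $v_-$ both have valence $k$. Finally, since $v_+=a\in A$ and $v_-=a^{-1}\in\overline{A}$, every path from $v_+$ to $v_-$ in $W(\underw,\mathcal{C})$ must use one of the crossing edges; deleting the $s:=E(A,\overline{A})$ edges of $E(A,\overline{A})$ therefore separates $v_+$ from $v_-$. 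Combined with the inequality above this gives $s<k$, which is the desired conclusion.

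The main obstacle is organizing the length-change bookkeeping so that the separating set appears transparently: one must pin down the convention for $(A,a)$ and verify the formula $|\tau\cdot\underw|-|\underw|=E(A,\overline{A})-\mathrm{val}(a)$, including the point that the multiplier $a$ lies in $A$ while $a^{-1}$ lies in $\overline{A}$, which is precisely what places $v_+$ and $v_-$ on opposite sides of the cut. Once the formula and the equal-valence observation are in hand, the extraction of the pair $(v_+,v_-)$ and the $s<k$ separating edges is immediate; the genuinely hard input, namely the existence of a length-reducing Whitehead move whenever $\mathcal{C}$ is non-minimal, is exactly the content of peak reduction and is being cited rather than reproved.
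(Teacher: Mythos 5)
Your argument is correct and follows exactly the route the paper indicates: the paper gives no proof of its own, citing Berge and noting the result ``follows from the peak reduction lemma of Higgins--Lyndon,'' and your derivation---translating minimality of $\mathcal{C}$ into length-minimality over the $\mathrm{Aut}(F_n)$--orbit, invoking peak reduction to get a length-reducing Whitehead automorphism $(A,a)$, and reading off $v_+=a$, $v_-=a^{-1}$, $s=E(A,\overline{A})<\mathrm{val}(a)=k$ from the length-change formula---is precisely that argument fleshed out. It is also consistent with the paper's remark after the theorem, which identifies $v_\pm$ as the two vertices coming from a single disk and the cut edges as those met by the replacement disk $D'$.
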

\begin{remark}
  The pair of vertices $v_\pm$ in Theorem \ref{t:reducible} is the
  pair of vertices coming from some single disk $D$ in $\mathcal{C}$.  One
  can obtain a new disk system $\mathcal{C}'$
  for $H$ by deleting $D$ and adding a
  disk $D'$ which intersects each of the edges $e_1,\ldots,e_s$
  exactly once, without intersecting any other edges.  The Whitehead
  graph $W(\underw,\mathcal{C}')$ is obtained from
  $W(\underw,\mathcal{C})$ by adding a new pair of vertices
  (corresponding to $D'$), deleting $\{v_\pm\}$, and ``splicing together''
  the edges incident to $\{v_\pm\}$.  (We will give a precise
  definition of \emph{splicing} in the next section.)
  An example of such an operation
  (called a \emph{Whitehead move}) is
  shown in Figure \ref{f:whmove}.
  \begin{figure}[htpb]
    \input{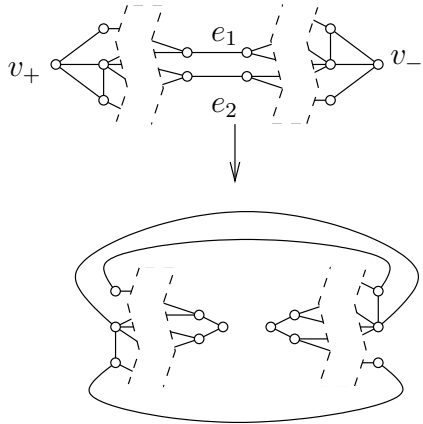}
    \caption{Reducing the number of edges in the Whitehead graph by a
      Whitehead move (an ``unsplice'' followed by a splice).}
    \label{f:whmove}
  \end{figure}
\end{remark}

\section{Splicing}
Let $G_1$ and $G_2$ be graphs containing 
vertices $v_1\in G_1$, $v_2\in
G_2$ of the same valence.  Let
$\sigma$ be a bijection from the edges 
incident to $v_1$ to the edges incident to $v_2$.  We can form a new
graph $G$ from this data by the procedure:
\begin{enumerate}
\item Form the disjoint union of $G_1$ and $G_2$.
\item Delete the vertices $v_1$ and $v_2$ (and any adjacent edges).
\item For each edge $e$ connecting $v_1$ to 
  some $v'\in G_1$, the
  corresponding edge $\sigma(e)$ connects $v_2$ to some $v''\in G_2$.
  Add an edge to $G$ connecting $v'$ to $v''$.
\end{enumerate}
If $G$ is obtained by such a procedure, we say that $G$ is obtained by
\emph{splicing $G_1$ and $G_2$ at $(v_1,v_2)$}.  (There may be more
than one such $G$ for a given $G_1,G_2,(v_1,v_2)$.)
\begin{lemma}\label{l:minor}
  Let $v_i\in G_i$ be vertices for $i\in \{1,2\}$.
  If $G_2\smallsetminus v_2$ is connected, and $G$ is obtained by splicing
  $G_1$ and $G_2$ at $(v_1,v_2)$, then $G_1$ is a minor of $G$.
\end{lemma}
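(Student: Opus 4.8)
The plan is to realize $G_1$ as a minor of $G$ by contracting the entire ``$G_2$ side'' of the splice to a single vertex, which will then play the role of $v_1$. Recall that $G_1$ is a \emph{minor} of $G$ if it can be obtained from $G$ by a finite sequence of vertex deletions, edge deletions, and edge contractions. Observe first that, by the three steps of the splicing construction, $G$ contains $G_1\smallsetminus v_1$ and $G_2\smallsetminus v_2$ as disjoint subgraphs, together with one \emph{spliced edge} for each edge $e$ incident to $v_1$: if $e$ joins $v_1$ to $v'\in G_1$ and $\sigma(e)$ joins $v_2$ to $v''\in G_2$, then the spliced edge joins $v'$ to $v''$.

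The key step uses the hypothesis that $G_2\smallsetminus v_2$ is connected. Fix a spanning tree $T$ of $G_2\smallsetminus v_2$ and contract each of its edges; this identifies \emph{all} the vertices of $G_2\smallsetminus v_2$ to a single vertex $\bar v$. Connectedness is exactly what guarantees a single resulting vertex: were $G_2\smallsetminus v_2$ disconnected, the spliced edges would land on several distinct vertices and $G_1$ would not be recovered. I would then delete every remaining edge of $G_2\smallsetminus v_2$, each of which has become a loop at $\bar v$. These operations leave $G_1\smallsetminus v_1$ and the spliced edges untouched.

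It remains to identify the resulting graph. Under the contraction, the second endpoint $v''\in G_2\smallsetminus v_2$ of each spliced edge is identified with $\bar v$, so every spliced edge now joins its first endpoint $v'\in G_1$ to $\bar v$. Since $\sigma$ is a bijection from the edges at $v_1$ to the edges at $v_2$, these spliced edges are in bijection with the edges incident to $v_1$, and they attach $\bar v$ to precisely the neighbors of $v_1$, counted with multiplicity. Hence the graph obtained is $G_1\smallsetminus v_1$ together with a vertex $\bar v$ whose incident edges are exactly those of $v_1$; relabeling $\bar v$ as $v_1$ exhibits an isomorphism with $G_1$.

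The only real bookkeeping to verify is in this last paragraph: that the contraction of $G_2\smallsetminus v_2$ creates no spurious adjacencies among vertices of $G_1\smallsetminus v_1$ and does not merge distinct spliced edges, and that parallel edges at $v_1$ (when $v_1$ has repeated neighbors) are matched correctly by $\sigma$. The degenerate case in which $G_2\smallsetminus v_2$ is a single vertex (so $T$ is empty) needs no contraction and the conclusion is immediate; throughout I read the connectedness hypothesis as including nonemptiness.
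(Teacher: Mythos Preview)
Your argument is correct and is exactly the paper's approach: the paper's one-line proof ``Collapse $G_2\smallsetminus v_2$ to a point to obtain a graph isomorphic to $G_1$'' is precisely what you carry out in detail via contracting a spanning tree and deleting the resulting loops. Your additional bookkeeping about spliced edges, multiplicities, and the degenerate case is sound and simply makes explicit what the paper leaves to the reader.
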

\begin{proof}
  Collapse $G_2\smallsetminus v_2$ to a point to obtain a graph
  isomorphic to $G_1$.
\end{proof}
\begin{definition}
  We say a graph is \emph{$p$--edge-connected} if no collection of
  $p-1$ or fewer edges disconnects the graph.
\end{definition}
\begin{lemma}\label{l:connected}
  Let $k>0$, and suppose that $G$ is obtained by splicing $G_1$ and
  $G_2$ at $(v_1,v_2)$.  If $G_1$ and $G_2$ are regular, $k$--valent,
  $k$--edge-connected graphs, then so is $G$. 
\end{lemma}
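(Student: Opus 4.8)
The plan is to establish the three conclusions separately---that $G$ is regular, $k$--valent, and $k$--edge-connected---with the last carrying essentially all the difficulty. The first two are a local degree count. Since $v_1$ and $v_2$ have valence $k$, the bijection $\sigma$ pairs the $k$ edges at $v_1$ with the $k$ edges at $v_2$, so splicing introduces exactly $k$ \emph{bridging} edges, each joining a vertex of $G_1\smallsetminus v_1$ to a vertex of $G_2\smallsetminus v_2$. I would then observe that a vertex $w$ of $G_1\smallsetminus v_1$ joined to $v_1$ by $t$ edges loses those $t$ edges when $v_1$ is deleted but acquires exactly $t$ bridging edges, so its valence remains $k$; vertices not adjacent to $v_1$ are untouched, and the $G_2$ side is symmetric. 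Hence $G$ is $k$--regular.

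For $k$--edge-connectivity I would use the min-cut reformulation: it suffices to show every partition of $V(G)$ into two nonempty parts $A,B$ has at least $k$ crossing edges (this simultaneously yields connectedness). Split along the two sides, writing $V(G_1)\smallsetminus v_1=A_1\sqcup B_1$ and $V(G_2)\smallsetminus v_2=A_2\sqcup B_2$, with $A=A_1\sqcup A_2$ and $B=B_1\sqcup B_2$. The crossing edges are the internal crossings $m_1,m_2$ within each side together with the crossing bridging edges. The key device is to reintroduce $v_1$: for instance $(A_1,\,B_1\cup\{v_1\})$ is a genuine two-part partition of $V(G_1)$ whenever $A_1\neq\emptyset$, and its cut equals $m_1$ plus the number of $v_1$--edges landing in $A_1$; $k$--edge-connectivity of $G_1$ forces this to be $\geq k$. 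Running this for each choice of which side absorbs $v_1$, and likewise for $v_2$ in $G_2$, produces four inequalities bounding $m_1$ and $m_2$ below.

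The crux is then a short combinatorial calculation. Classifying the $k$ bridging edges by the four ``quadrants'' $A_1A_2,A_1B_2,B_1A_2,B_1B_2$, with counts $p,q,r,s$ satisfying $p+q+r+s=k$, the crossing bridging edges number $q+r$, while the four inequalities read (in these variables) $m_1\geq k-(p+q)$, $m_1\geq k-(r+s)$, $m_2\geq k-(p+r)$, $m_2\geq k-(q+s)$. Adding the first and last and then $q+r$ gives total crossing $\geq k+2r\geq k$, and the mirror pairing gives $\geq k+2q\geq k$. I expect the main obstacle to be the bookkeeping around empty parts: the pairing I use requires the corresponding $A_i$ or $B_i$ to be nonempty (so that the auxiliary partition of $V(G_i)$ is legitimate), and I must check that at least one of the two pairings always survives. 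This reduces to noting that the four ways both pairings could fail each force $A$, $B$, $V(G_1)\smallsetminus v_1$, or $V(G_2)\smallsetminus v_2$ to be empty, which is excluded; the single genuinely degenerate partition $V(G_1)\smallsetminus v_1$ versus $V(G_2)\smallsetminus v_2$, whose cut is precisely the $k$ bridging edges, is handled directly and meets the bound with equality.
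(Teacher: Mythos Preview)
Your argument is correct and complete; the bookkeeping around empty parts works out exactly as you indicate (and in fact the ``degenerate'' bipartition $V(G_1)\smallsetminus v_1$ versus $V(G_2)\smallsetminus v_2$ is already covered by your Pairing~1, so that last sentence is harmless redundancy rather than a needed extra case).

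The approach, however, is genuinely different from the paper's. You work directly with min-cuts: for an arbitrary bipartition of $V(G)$ you classify the $k$ bridging edges by quadrant, produce four lower bounds on the internal cuts $m_1,m_2$ by reinserting $v_i$ on one side or the other, and then add a suitable pair of inequalities. The paper instead argues via an auxiliary connectivity fact: it first shows each $G_i\smallsetminus v_i$ is $(\lfloor\tfrac{k-1}{2}\rfloor+1)$--edge-connected, so that any set $E$ of at most $k-1$ edges leaves one of the two sides, say $G_1\smallsetminus v_1$, connected; it then observes that $G_2\smallsetminus v_2$ together with the bridging edges and their endpoints in $G_1$ admits a quotient map onto $G_2$, and $k$--edge-connectivity of $G_2$ forces every vertex on that side to reach some bridging endpoint, hence the connected $G_1$ side. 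Your route is pure counting with no intermediate lemma, at the cost of the quadrant and empty-part case analysis; the paper's route is more structural and yields the edge-connectivity of $G_i\smallsetminus v_i$ as a byproduct, but uses a slightly less direct path to the conclusion.
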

\begin{proof}
  For $i\in \{1,2\}$, let $G_i'$ be the graph obtained from $G_i$ by
  removing $v_i$ and all the edges adjacent to $v_i$.  The graph
  $G_i'$  will be thought of either as a subgraph of $G_i$ or as a
  subgraph of $G$, depending on context.

  For $i\in \{1,2\}$ we claim that $G_i'$ is 
  $(\lfloor\frac{k-1}{2}\rfloor+1)$--edge-connected.  
  Indeed, suppose that $G_1'$ is
  disconnected by $\lfloor\frac{k-1}{2}\rfloor$ or fewer
  edges into components
  $C_1,\ldots C_c$ for some $c\geq 2$.  For some $1\leq j\leq c$, the
  number of edges connecting $v_1$ to $C_j$ in $G_1$
  is at most $\lfloor\frac{k}{2}\rfloor$.  It follows that $G_1$ can
  be disconnected by at most
  $\lfloor\frac{k-1}{2}\rfloor+ \lfloor\frac{k}{2}\rfloor = k-1$
  edges, contradicting the $k$--edge-connectedness of $G_1$.

  Let $E$ be a set of $(k-1)$ or fewer edges in $G$.  Either $G_1'$
  or $G_2'$ contains at most $\lfloor\frac{k-1}{2}\rfloor$ edges in
  $E$.  Switching labels if necessary, we suppose it is $G_1'$.  Since
  $G_1'$ is $(\lfloor\frac{k-1}{2}\rfloor+1)$--edge-connected, the subgraph
  $G_1'\smallsetminus E\subset G\smallsetminus E$ is connected.
  
  Let $x_1,\ldots,x_k$ be the (not necessarily distinct) vertices of
  $G_1$ connected to $v_1$, and let $E_0$ be the set of $k$ edges in
  $G$ connecting $G_1'$ to $G_2'$.
  Finally, let $G_2''\subset G$ be
  $$G_2'\cup\{x_1,\ldots,x_k\}\cup E_0.$$   Notice that there is a
  quotient map $p\co G_2''\onto G_2$ which takes each $x_j$ to $v_2$.
  Since $G_2$ is $k$--edge-connected, $p(E\cap G_2'')$ doesn't
  disconnect $G_2$.  This implies that every vertex of $G_2''$ is
  connected in $G_2''\smallsetminus E$ to $x_j$ for some $j$.  Since
  the $x_j$ are all in $G_1'$, and $G_1'\smallsetminus E$ is
  connected, $G\smallsetminus E$ is connected.
\end{proof}

We now explain how splicing arises in building Whitehead graphs.
Let $\underw$ be a collection of words, as in Section \ref{s:whitehead}.
Let $\mathcal{C}$ be a complete
system of disks for the handlebody $H$, and let $\tilde{H}$ be some
$d$--fold cover of $H$, with $\pi_1(\tilde{H}) = F'$.
The preimage $\tilde{\mathcal{C}}$ in
$\tilde{H}$ has too many
disks to be a complete disk system for $\tilde{H}$, but we can still
form the graph $W(\underw_{F'},\tilde{\mathcal{C}})$.  Each component
of the complement of $\tilde{\mathcal{C}}$ looks exactly like the
complement of $\mathcal{C}$ in $H$, so we obtain the following:
\begin{lemma}\label{l:disjoint}
  The graph $W(\underw_{F'},\tilde{\mathcal{C}})$ is isomorphic to a
  disjoint union of $d$ copies of $W(\underw,\mathcal{C})$.
\end{lemma}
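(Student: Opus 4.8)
The plan is to exploit the observation made just before the statement: cutting the handlebody along a \emph{complete} disk system leaves a simply connected piece, so upstairs the cover is forced to split into disjoint copies. Concretely, since $\mathcal{C}$ is complete, $B := H \smallsetminus \cup\mathcal{C}$ is a single ball, and each disk of $\mathcal{C}$ is itself simply connected.

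First I would analyze the restriction of the covering map $p$ to the complement of $\tilde{\mathcal{C}}$. The preimage $\tilde{\mathcal{C}} = p^{-1}(\mathcal{C})$ is a disjoint union of disks, since each (simply connected) disk of $\mathcal{C}$ has $d$ homeomorphic lifts; and the restriction
$$p \co \tilde{H} \smallsetminus \cup\tilde{\mathcal{C}} \longrightarrow B$$
is a $d$--fold covering of the ball $B$. As $B$ is simply connected, this covering is trivial, so $\tilde{H} \smallsetminus \cup\tilde{\mathcal{C}}$ is a disjoint union of balls $B_1,\ldots,B_d$, each mapped homeomorphically onto $B$ by $p$. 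This is the precise sense in which each component of the complement of $\tilde{\mathcal{C}}$ ``looks exactly like'' the complement of $\mathcal{C}$.

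Next I would transport the combinatorial data defining the Whitehead graph (Remark \ref{r:wggeo}) through these homeomorphisms. Put $N_{\underw}$ in minimal position with respect to $\mathcal{C}$, so that cutting presents $B$ with its $2n$ boundary half-disks $\mathcal{D}_\pm$ and the arc system $N' = N_{\underw} \cap B$. The loops $p^{-1}(N_{\underw})$ realize the lifted classes $\underw_{F'}$, and on each $B_j$ the homeomorphism $p|_{B_j}$ carries the $2n$ half-disks of $\tilde{\mathcal{C}}$ lying on $\partial B_j$ bijectively onto $\mathcal{D}_\pm$, and carries $p^{-1}(N_{\underw}) \cap B_j$ onto $N'$ as arc systems with boundary on those half-disks. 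Since each arc of $p^{-1}(N_{\underw})$ stays in a single component $B_j$, the crushing construction never joins vertices belonging to different balls; restricted to the $2n$ vertices coming from $\partial B_j$ it reproduces exactly $W(\underw, \mathcal{C})$. Ranging over $j = 1,\ldots,d$ yields the claimed isomorphism with a disjoint union of $d$ copies, and the vertex count $2dn$ matches $2\cdot|\tilde{\mathcal{C}}|$ on the nose.

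The one point demanding care -- the main obstacle -- is to confirm that the lifted configuration is itself in minimal position with respect to $\tilde{\mathcal{C}}$, so that $W(\underw_{F'}, \tilde{\mathcal{C}})$ is genuinely computed from it. I would again use the homeomorphisms $p|_{B_j}$: any superfluous intersection of $p^{-1}(N_{\underw})$ with $\tilde{\mathcal{C}}$, removable by pushing an arc of some $B_j$ across a half-disk, would descend under $p|_{B_j}$ to such a reduction in $B$, contradicting the minimality of $N_{\underw}$ downstairs. Hence the intersection upstairs is already minimal, the edge count upstairs is exactly $d$ times that downstairs, and the disjoint-union identification is the correct Whitehead graph.
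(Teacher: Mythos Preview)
Your proposal is correct and follows exactly the approach the paper indicates: the paper offers no formal proof, stating only that ``each component of the complement of $\tilde{\mathcal{C}}$ looks exactly like the complement of $\mathcal{C}$ in $H$,'' and you have carefully unpacked precisely this observation. Your attention to the minimal-position requirement is a welcome addition, since the paper leaves that point implicit.
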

To get a complete system of disks for $\tilde{H}$, we must delete some
disks from $\tilde{\mathcal{C}}$.   Each time we delete one of these
disks, the number of complementary components goes down by one, and
the Whitehead graph changes by splicing two components 
together:
\begin{lemma}\label{l:splicecomplete}
  Let $\mathcal{D}_0\subseteq\mathcal{D}_1\subset\mathcal{D}_2$ be
  disk systems in a handlebody $H$ so that $\mathcal{D}_0$ is complete
  and $\mathcal{D}_2\smallsetminus\mathcal{D}_1$ is a single disk.
  Let $\underw$ be a collection of conjugacy classes in $\pi_1(H)$.  
  The graph $W_1 = W(\underw,\mathcal{D}_1)$ is obtained from $W_2 =
  W(\underw,\mathcal{D}_2)$ by splicing two components of $W_2$
  together at a vertex.
\end{lemma}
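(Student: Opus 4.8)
The plan is to work throughout with the topological model of the Whitehead graph from Remark~\ref{r:wggeo}. I realize $\underw$ as an embedded $1$--submanifold $N\subset H$ meeting $\mathcal{D}_2$ (hence also $\mathcal{D}_1$) transversely and minimally. Cutting $H$ along a disk system $\mathcal{C}$ produces a compact manifold whose components are the closures of the components of $H\smallsetminus\cup\mathcal{C}$; each disk of $\mathcal{C}$ contributes two disks (its two sides) to the boundary, and $N$ is cut into a collection of arcs $N'$ with endpoints on these boundary disks. The graph $W(\underw,\mathcal{C})$ is recovered by crushing each boundary disk to a vertex and each arc of $N'$ to an edge. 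The first thing I would record is that an arc of $N'$ lies in a single component of the cut manifold, so every edge of $W(\underw,\mathcal{C})$ joins two vertices coming from disk-sides on the boundary of one and the same component. Consequently $W(\underw,\mathcal{C})$ is the disjoint union, over complementary components $R$, of the subgraphs carried by the $R$, and in particular each connected component of $W(\underw,\mathcal{C})$ is supported on a single $R$.

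Next I would use the hypothesis $\mathcal{D}_0\subseteq\mathcal{D}_1$ to control these pieces. Since $\mathcal{D}_0$ is complete, cutting $H$ along $\mathcal{D}_0$ yields a single ball; the remaining disks of $\mathcal{D}_1$ are disjoint properly embedded disks in this ball, and adding them one at a time keeps every component a ball (a properly embedded disk in a ball separates it into two balls). Thus $H\smallsetminus\cup\mathcal{D}_1$ is a disjoint union of balls. Writing $D=\mathcal{D}_2\smallsetminus\mathcal{D}_1$, the disk $D$ is properly embedded in one of these balls and therefore separates it into two balls. Hence the two sides $v_+,v_-$ of $D$ lie in distinct complementary components of $\mathcal{D}_2$, and by the previous paragraph they lie in distinct connected components $G_+,G_-$ of $W_2$.

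Finally I would identify the effect of deleting $D$. Passing from $\mathcal{D}_2$ to $\mathcal{D}_1$ amounts to regluing the two balls separated by $D$ into one. The edges of $W_2$ incident to $v_+$ are exactly the arcs of $N'$ ending on the $+$ side of $D$, so they are in bijection with the points of $N\cap D$; likewise for $v_-$. At each point $p\in N\cap D$ the strand of $N$ crossing $D$ contributes one arc ending at $v_+$ and one arc ending at $v_-$, and matching these via $p$ defines a bijection $\sigma$ between the edges at $v_+$ and the edges at $v_-$ (in particular $v_+$ and $v_-$ have equal valence). When $D$ is removed, the two arcs at $p$ reunite into a single arc of $N'$ joining their opposite endpoints, i.e.\ for each edge $e$ from $v_+$ to $v'$ the reunion produces an edge from $v'$ to the far endpoint $v''$ of $\sigma(e)$. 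Comparing with the definition of splicing given above, this is precisely the splice of $G_+$ and $G_-$ at $(v_+,v_-)$ along $\sigma$, while all other components of $W_2$ are untouched; the result is $W_1$.

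The point requiring the most care is the separation claim: it is the completeness of $\mathcal{D}_0$ (and nothing weaker) that forces the component of $H\smallsetminus\cup\mathcal{D}_1$ containing $D$ to be a ball, so that $D$ separates and $v_+,v_-$ genuinely land in two different components of $W_2$. The rest is bookkeeping, checking that the reunion of arcs across the removed disk matches the formal splicing recipe, including the degenerate case $N\cap D=\varnothing$, where $\sigma$ is empty and the splice merely merges two components by deleting a pair of isolated vertices.
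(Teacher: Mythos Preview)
Your proof is correct. The paper does not give a proof of this lemma at all: it is stated as evident from the topological description of the Whitehead graph in Remark~\ref{r:wggeo}, and your argument is precisely the natural unpacking of that description. Your emphasis on why the completeness of $\mathcal{D}_0$ is needed (to guarantee that the complementary region containing $D$ is a ball, so that $D$ separates and $v_+,v_-$ land in distinct components of $W_2$) is exactly the one point with real content, and the rest is the expected bookkeeping. One small remark: your parenthetical ``hence also $\mathcal{D}_1$'' for minimality is justified by the observation that any arc of $N$ in a complementary ball of $\mathcal{D}_1$ meets $D$ at most once (else a subarc in a $\mathcal{D}_2$--ball would have both endpoints on the same side of $D$), so its endpoints either lie in the same $\mathcal{D}_2$--ball or in $B_+$ and $B_-$ respectively, and in neither case can they lie on a common disk-side of $\mathcal{D}_1$.
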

(Similarly, the Whitehead move in Figure \ref{f:whmove}
can be described as an ``unsplicing'' of
the edges $e_1,\ldots,e_s$ followed by 
splicing the resulting components together at $v_\pm$.)

\section{Examples}\label{s:answer}
\begin{theorem}\label{t:main}
  Let $\mathcal{C}$ be a complete system of disks for 
  the handlebody $H$, and let
  $\underw$ be a collection of conjugacy classes in $\pi_1(H)$.  Let
  $k\geq 3$.
  If $W(\underw,\mathcal{C})$ is a regular, $k$--valent,
  $k$--edge-connected non-planar graph, then $\underw$ is not virtually
  geometric. 
\end{theorem}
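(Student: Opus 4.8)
The plan is to argue by contradiction, passing to the lifts $\underw_{F'}$ via a covering space and then tracking two invariants---$k$-edge-connectedness and the presence of $W(\underw,\mathcal{C})$ as a minor---through the splicing moves that convert the pulled-back disk system into a complete one.

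First I would suppose $\underw$ is virtually geometric, so that $\underw_{F'}$ is geometric for some $F'<F$ of index $d=[F:F']$. Let $p\co\tilde H\to H$ be the corresponding $d$-fold cover and $\tilde{\mathcal{C}}=p^{-1}(\mathcal{C})$. By Lemma \ref{l:disjoint}, $W(\underw_{F'},\tilde{\mathcal{C}})$ is a disjoint union of $d$ copies of $W(\underw,\mathcal{C})$; in particular each component is regular, $k$-valent, $k$-edge-connected and non-planar. The system $\tilde{\mathcal{C}}$ is not complete, so I would delete disks from it one at a time until reaching a complete system $\mathcal{C}'$ for $\tilde H$. Since cutting $\tilde H$ along all of $\tilde{\mathcal{C}}$ yields $d$ balls, exactly $d-1$ deletions are needed, and they can be ordered so that each one merges two distinct complementary components (delete along a spanning tree of the dual graph whose vertices are the $d$ balls). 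By Lemma \ref{l:splicecomplete}, each such deletion realizes the smaller Whitehead graph as a splice of two components of the larger one.

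Next I would run the induction on these $d-1$ splices. The key invariant is that every component of the current Whitehead graph is regular, $k$-valent and $k$-edge-connected: this holds initially and is preserved by each splice via Lemma \ref{l:connected}. Because $k\geq 3$, each such component $G_2$ has $G_2\smallsetminus v_2$ connected (this is exactly the edge-connectivity estimate proved inside Lemma \ref{l:connected}), so Lemma \ref{l:minor} applies at every splice and exhibits the other spliced component as a minor of the result. Tracking a single original copy of $W(\underw,\mathcal{C})$ and using transitivity of the minor relation, I conclude that the final graph $W(\underw_{F'},\mathcal{C}')$ is regular, $k$-valent, $k$-edge-connected, and contains $W(\underw,\mathcal{C})$ as a minor; since planarity is closed under minors and $W(\underw,\mathcal{C})$ is non-planar, $W(\underw_{F'},\mathcal{C}')$ is non-planar as well.

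Finally I would close with a dichotomy on $\mathcal{C}'$. If $\mathcal{C}'$ were not minimal, Theorem \ref{t:reducible} would produce $s<k$ edges separating two vertices, i.e.\ an edge cut of size at most $k-1$, contradicting $k$-edge-connectedness; hence $\mathcal{C}'$ is minimal. But then, since $\underw_{F'}$ is geometric, Theorem \ref{t:planar} forces $W(\underw_{F'},\mathcal{C}')$ to be planar, contradicting the previous paragraph, so $\underw$ cannot be virtually geometric. I expect the main obstacle to be bookkeeping rather than conceptual: one must verify that both invariants genuinely survive every splice (edge-connectivity via Lemma \ref{l:connected}, the minor via Lemma \ref{l:minor}) and that the disks can be deleted in an order making each step a legitimate splice of two components. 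The slickest point is the final dichotomy, where $k$-edge-connectedness is used precisely to rule out the non-minimal case of Theorem \ref{t:reducible}, so that Theorem \ref{t:planar} can be invoked without separately establishing minimality of $\mathcal{C}'$.
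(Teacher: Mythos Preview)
Your proposal is correct and follows essentially the same route as the paper: lift $\mathcal{C}$ to the cover, splice components together $d-1$ times to obtain a complete system, use Lemma~\ref{l:connected} inductively to preserve $k$-valence and $k$-edge-connectedness (hence minimality via Theorem~\ref{t:reducible}), and use Lemma~\ref{l:minor} inductively to retain $W(\underw,\mathcal{C})$ as a minor (hence non-planarity), contradicting Theorem~\ref{t:planar}. The only cosmetic differences are that you frame it as a contradiction rather than showing no cover works, and you are a bit more explicit about the spanning-tree ordering of disk deletions and about why $G_2\smallsetminus v_2$ is connected.
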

\begin{proof}
  Let $\tilde{H}$ be a $d$--fold cover of $H$, corresponding to $F'<F$,
  where $F = \pi_1(H)$, and let
  $\tilde{\mathcal{C}}$ be the preimage of $\mathcal{C}$ in
  $\tilde{H}$.  

  By Lemma \ref{l:disjoint}, the Whitehead graph
  $W(\underw_{F'},\tilde{\mathcal{C}})$ is equal to $d$ disjoint copies
  of $W(\underw,\mathcal{C})$.  If $\mathcal{D}\subset
  \tilde{\mathcal{C}}$ is a complete system of disks for $\tilde{H}$,
  then Lemma \ref{l:splicecomplete} implies that
  $W(\underw_{F'},\mathcal{D})$ is obtained from
  $W(\underw_{F'},\tilde{\mathcal{C}})$ by successively splicing together
  distinct components in some way (determined by the choice of
  complete system
  $\mathcal{D}\subset \tilde{\mathcal{C}}$) until only
  $2\cdot \mathrm{rank}(F')$ vertices remain.  Exactly $d-1$ splices
  will be performed, and each splice reduces
  the number of components by one, by Lemma \ref{l:connected}, so
  $W(\underw_{F'},\mathcal{D})$ will be connected.

  Moreover, induction on $d$ together with  Lemma \ref{l:connected} 
  imply that
  $W(\underw_{F'},\mathcal{D})$ is $k$--valent and
  $k$--edge-connected.
  By Theorem \ref{t:reducible}, the disk system $\mathcal{D}$ is
  minimal.  

  A $k$--valent, $k$--edge-connected graph cannot be disconnected by
  removing a single vertex, so induction on $d$ and
  Lemma \ref{l:minor} imply that the graph $W(\underw,\mathcal{C})$ is a
  minor of $W(\underw_{F'},\mathcal{D})$.  
  It follows that $W(\underw_{F'},\mathcal{D})$ is
  non-planar.  

  Theorem \ref{t:planar} then implies that $\underw_{F'}$ is
  not geometric in $\tilde{H}$.  Since the finite cover $\tilde{H}$
  was arbitrary, $\underw$ is not virtually geometric.
\end{proof}
A concrete example is the word $bbaaccabc\in F_3=\langle a,b,c\rangle$; the Whitehead graph
$W(\{bbaaccabc\})$ is isomorphic to the complete bipartite graph
$K_{3,3}$.  This graph is  $3$--valent, $3$--edge-connected, and
non-planar.  Applying Theorem \ref{t:main} gives the following.
\begin{corollary}\label{cor:k33}
  The word $w=bbaaccabc$ in $F_3 = \langle a,b,c\rangle$ is not
  virtually geometric.
\end{corollary}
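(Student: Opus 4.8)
The plan is to apply Theorem \ref{t:main} with $k=3$. I take $\mathcal{C}$ to be the standard complete disk system of Remark \ref{r:wggeo}, dual to the generators $a,b,c$, so that $W(\underw,\mathcal{C})$ is precisely the Whitehead graph $W(\{bbaaccabc\})$ defined in Section \ref{s:whitehead}. Once I verify that this graph is regular, $3$--valent, $3$--edge-connected, and non-planar, Theorem \ref{t:main} immediately yields that $\underw=\{bbaaccabc\}$ is not virtually geometric. Thus the entire task reduces to one explicit computation of the Whitehead graph together with three elementary graph-theoretic checks.

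First I would build the graph directly from the definition. Reading $bbaaccabc$ cyclically, its nine two-letter subwords (including the wrap-around $cb$) are
\[ bb,\ ba,\ aa,\ ac,\ cc,\ ca,\ ab,\ bc,\ cb. \]
Because $w$ is a \emph{positive} word, every subword has the form $xy$ with $x,y\in\{a,b,c\}$, so the rule ``$xy$ contributes an edge from $x^{-1}$ to $y$'' makes each of the nine edges run from a vertex in $\{a^{-1},b^{-1},c^{-1}\}$ to a vertex in $\{a,b,c\}$. The graph is therefore bipartite with these two parts, and the key step is to check that the nine edges realize each of the nine possible pairs between the two parts exactly once; this exhibits $W(\{bbaaccabc\})$ as the complete bipartite graph $K_{3,3}$.

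The three hypotheses are then standard. Regularity and $3$--valence are immediate, since the computation shows each of the six vertices meets exactly three of the listed edges. Non-planarity of $K_{3,3}$ is Kuratowski's classical fact. For $3$--edge-connectivity I would invoke that the edge-connectivity of $K_{m,n}$ equals $\min(m,n)$, here $3$; alternatively one checks directly that no two of the nine edges disconnect the graph. I do not expect a genuine obstacle here: the only content beyond Theorem \ref{t:main} is the bookkeeping identification $W(\{bbaaccabc\})\cong K_{3,3}$, and the one point requiring mild care is to read the subwords cyclically so that the wrap-around edge from $c^{-1}$ to $b$ is not dropped. With all hypotheses verified at $k=3$, Theorem \ref{t:main} completes the proof.
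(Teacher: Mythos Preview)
Your proposal is correct and follows exactly the paper's own argument: compute the Whitehead graph of $bbaaccabc$ with respect to the standard disk system, identify it with $K_{3,3}$, note that $K_{3,3}$ is $3$--valent, $3$--edge-connected, and non-planar, and invoke Theorem~\ref{t:main}. Your explicit listing of the nine cyclic two-letter subwords and the resulting edges is more detailed than the paper's one-line verification, but the logic is identical.
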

For an example lying in the commutator subgroup, we take $w =
baabccACBBCA$.  (We use the common convention $A=a^{-1}$, $B=b^{-1}$,
$C=c^{-1}$.) 
The Whitehead graph, shown in Figure \ref{f:comm}, is
\begin{figure}
  \input{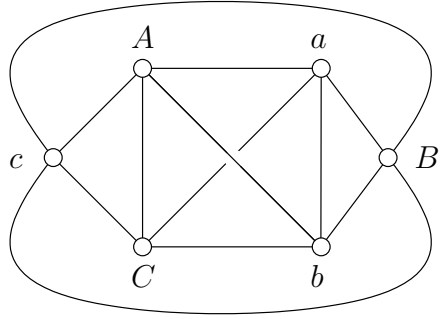}
  \caption{The Whitehead graph of the word $w = baabccACBBCA$ is
    non-planar, $4$--valent, and $4$--edge-connected.}
  \label{f:comm}
\end{figure}
non-planar, $4$--valent, and $4$--edge-connected, so Theorem
\ref{t:main} immediately gives the following.
\begin{corollary}\label{cor:comm}
  The word $w= baabccACBBCA$ in $F_3 = \langle a,b,c\rangle$ is not
  virtually geometric.
\end{corollary}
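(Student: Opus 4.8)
The plan is to deduce this as a direct application of Theorem \ref{t:main} with $k=4$, so the whole task reduces to reading off the Whitehead graph $W(\{w\})$ of the cyclic word $w = baabccACBBCA$ and checking three combinatorial properties: that it is $4$--valent, that it is $4$--edge-connected, and that it is non-planar. Since Theorem \ref{t:main} is already available, nothing beyond this verification is required.

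First I would build $W(\{w\})$ directly from the recipe in Section \ref{s:whitehead}. Reading $w$ cyclically as the twelve letters $b,a,a,b,c,c,A,C,B,B,C,A$ and recording, for each consecutive pair $xy$, an edge from $x^{-1}$ to $y$, one obtains twelve edges on the six vertices $a,A,b,B,c,C$. A bookkeeping of incidences shows each vertex has valence exactly $4$, so the graph is regular and $4$--valent, matching Figure \ref{f:comm}. The one feature worth flagging is that the pairs $bc$ and $CB$ both contribute an edge joining $B$ and $c$, so $W(\{w\})$ carries a double edge there.

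Next I would establish non-planarity by exhibiting a $K_5$ minor. Contracting one of the two parallel $B$--$c$ edges merges $B$ and $c$ into a single vertex whose neighbours are $a,b$ (inherited from $B$) and $A,C$ (inherited from $c$); one then checks that the five classes $\{a\},\{A\},\{b\},\{C\},\{B,c\}$ are pairwise joined by an edge, so the contracted simple graph is $K_5$. Hence $W(\{w\})$ is non-planar. For $4$--edge-connectivity I would note that the graph is $4$--regular, so for any vertex set $S$ the edge cut separating $S$ from its complement has size $4|S| - 2e(S)$, where $e(S)$ counts the edges internal to $S$; a cut of size at most $3$ would therefore force $e(S) \ge 2|S|-1$. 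A short check over $|S|\in\{1,2,3\}$ shows this never occurs, since the only repeated edge is the double $B$--$c$ and no third vertex is adjacent to both $B$ and $c$, so $S$ can never be internally dense enough. Thus the edge-connectivity is exactly $4$.

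With these three properties in hand, Theorem \ref{t:main} applies verbatim with $k=4$ and yields that $\underw = \{[w]\}$ is not virtually geometric, which is precisely the assertion of the corollary. I expect the only slightly fiddly step to be the edge-connectivity verification, since it requires ruling out all small cuts rather than a single distinguished one; everything else is a transcription of the defining construction of the Whitehead graph together with the already-established machinery of Theorem \ref{t:main}.
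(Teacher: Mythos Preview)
Your proposal is correct and follows exactly the paper's approach: the paper simply asserts (referring to Figure~\ref{f:comm}) that the Whitehead graph of $baabccACBBCA$ is non-planar, $4$--valent, and $4$--edge-connected, and then invokes Theorem~\ref{t:main}. Your write-up supplies the explicit verifications the paper omits --- the edge list, the $K_5$ minor via contracting the double $\{B,c\}$ edge, and the small-cut enumeration --- and these are all accurate.
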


The curious reader should be able to construct numerous similar
examples, but it should be noted that either of the above examples gives
non-virtually geometric words in $F_n$ for all $n\geq 2$, as the
following proposition shows.

\begin{proposition}
Let $\underw = \{w\}$ be a non-virtually geometric collection 
of words in $F_3$.
For all $n\geq 2$ there is an embedding $\phi_n\co F_3\to F_n$ so that
$\phi_n(\underw)$ is not virtually geometric in $F_n$.
\end{proposition}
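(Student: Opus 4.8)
The plan is to deduce the Proposition from three stability properties of virtual geometricity, after choosing $\phi_n$ so that $\phi_n(w)$ lands in a free factor isomorphic to $F_3$ inside a finite-index subgroup of $F_n$. For every $n\geq 2$ I would fix a finite-index subgroup $F'\leq F_n$ together with a free product decomposition $F'=A*C$ with $A\cong F_3$. When $n\geq 3$ take $F'=F_n$ and $A=\langle x_1,x_2,x_3\rangle$, so $C=\langle x_4,\dots,x_n\rangle$ (trivial if $n=3$); when $n=2$, Nielsen--Schreier produces an index-two subgroup $F'\leq F_2$ of rank $3$, and we take $A=F'\cong F_3$ with $C$ trivial. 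Let $\phi_n\co F_3\to F_n$ be an isomorphism onto $A$ followed by the inclusions $A\hookrightarrow F'\hookrightarrow F_n$; this is an embedding with $\phi_n(w)\in A$.

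The three properties I would isolate are: (i) \emph{commensurability}: for $[F:F']<\infty$, if a collection is virtually geometric in $F$ then its lift is virtually geometric in $F'$; (ii) \emph{monotonicity}: a subcollection of a virtually geometric collection is virtually geometric; and (iii) the \emph{free-factor property}: for $F=A*C$, a collection of conjugacy classes in $A$ that is virtually geometric in $F$ is already virtually geometric in $A$. Granting these, suppose $\{\phi_n(w)\}$ were virtually geometric in $F_n$. By (i), $\{\phi_n(w)\}_{F'}$ is virtually geometric in $F'$; since $\phi_n(w)\in F'$, the class $[\phi_n(w)]$ occurs in this lift (the term with $g=1$), so (ii) makes $\{\phi_n(w)\}$ virtually geometric in $F'$; and as $\phi_n(w)\in A$, (iii) makes it virtually geometric in $A\cong F_3$, i.e.\ $w$ virtually geometric in $F_3$, contrary to hypothesis. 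Only (iii) is needed when $n\geq 3$, and only (i)--(ii) when $n=2$.

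Properties (i) and (ii) should be routine. They rest on two facts from the definitions: lifts compose, $(\underline{u}_{F'})_{F''}=\underline{u}_{F''}$ for $F''\leq F'\leq F$; and geometricity passes to finite covers, because a cover of a handlebody restricts to a cover of its boundary and the preimage of an embedded $1$--manifold is embedded. For (i), realize virtual geometricity in $F$ by some $F''$, pass to the deeper cover $F''\cap F'$ (preserving geometricity of the lift), and note $F''\cap F'\leq F'$ witnesses virtual geometricity of the lift in $F'$. For (ii), the lift of a subcollection is a subcollection of the lift, and a subfamily of an embedded family is embedded.

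The main obstacle is (iii), which I would prove topologically. Write $H_F$ as a boundary connected sum of $H_A$ and $H_C$, with the curves $N$ representing the collection in the interior of $H_A$, hence disjoint from the connecting disks. Given a finite cover $q\co\tilde H_F\to H_F$ for which $q^{-1}(N)$ is homotopic to an embedding into $\partial\tilde H_F$, the preimage $q^{-1}(H_A)$ is a disjoint union of sub-handlebodies $\tilde H_A^{(j)}$, each a finite cover of $H_A$ (with $\pi_1$ equal to a finite-index subgroup $A\cap g_j^{-1}F'g_j$ of $A$), and $q^{-1}(N)$ lies in $q^{-1}(H_A)$. Fixing one such $j$, the lift $q^{-1}(N)\cap\tilde H_A^{(j)}$ realizes the lifted collection for this cover of $H_A$, so it suffices to isotope the corresponding components of the boundary embedding off the circles $\partial\delta$ bounding the disks $\delta$ that separate $\tilde H_A^{(j)}$ from its complement, thereby placing them in $\partial\tilde H_A^{(j)}$. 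Each such component is freely homotopic in $\tilde H_F$ into $\tilde H_A^{(j)}$, so I expect an innermost-arc/bigon argument on the surface $\partial\tilde H_F$ to remove all of its intersections with the separating curves $\partial\delta$. The delicate point, which I would need to nail down, is that these surgeries can be carried out simultaneously for all components while keeping the whole family embedded; once done, the lifted collection embeds in $\partial\tilde H_A^{(j)}$ and the collection is virtually geometric in $A$.
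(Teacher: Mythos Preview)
Your overall plan matches the paper's: handle $n=2$ by embedding $F_3$ as an index-two subgroup, and $n\geq 3$ by embedding $F_3$ as a free factor.  Properties (i) and (ii) are routine and cover the $n=2$ case.  The difficulty is your argument for property (iii), and the gap is not where you locate it.

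You worry about keeping the family embedded during the surgeries, but the more basic problem is that the bigon argument on $\partial\tilde H_F$ need not get started.  A component $\gamma$ of your embedded family is freely homotopic \emph{in the handlebody} $\tilde H_F$ into $\tilde H_A^{(j)}$, but this does not force $\gamma$ to be isotopic \emph{on the surface} $\partial\tilde H_F$ to a curve disjoint from $\partial\delta$.  The complementary subsurface $\partial\tilde H_F\smallsetminus(\partial\tilde H_F\cap\tilde H_A^{(j)})$ generally has positive genus, so an arc of $\gamma$ lying there need not cobound any bigon with $\partial\delta$.  Concretely, already in $H_F=H_A\natural H_C$: band-sum a curve on the $A$ side with a meridian of $H_C$; the result intersects $\partial\delta$ essentially on $\partial H_F$ (its surface homology class is supported on both sides), yet it is homotopic in $H_F$ into $H_A$ because the meridian bounds a disk in $H_C$.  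There are no innermost arcs to push across.

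The paper repairs exactly this point by passing to one further cover.  After choosing the finite cover $\tilde H\to H_n$ and a component $\tilde H'\subset p^{-1}(H_3)$, take the \emph{infinite} cover $\tilde H''\to\tilde H$ corresponding to the free factor $\pi_1(\tilde H')\leq\pi_1(\tilde H)$.  Then $\tilde H''$ is $\tilde H'$ with simply connected pieces $A_i$ attached along single disks, each $A_i$ a ball with a Cantor set removed from its boundary; in particular $\partial A_i$ is \emph{planar}.  Now every arc of a compact embedded curve in $\partial\tilde H''$ that lies in some $\partial A_i$ \emph{does} cobound a disk in $\partial A_i$ with an arc of the attaching circle, so an innermost-disk isotopy on $\partial\tilde H''$ pushes all compact curves into $\partial\tilde H'$ while preserving embeddedness.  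Since the compact components of the preimage of $N$ in $\tilde H''$ are precisely those of the finite cover $\tilde H'\to H_3$, this exhibits the lift of $\underw$ as geometric there.  Passing to the infinite cover is exactly what turns your intended innermost-arc argument into a valid one.
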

\begin{proof}
For $n=2$ we argue as follows:
Let $\phi_2\co F_3\to F_2$ be any
map with image an index $2$ subgroup.    Indeed, the homomorphism $\phi_2$ is
realized by a double cover of handlebodies $H_3\to H_2$, and any cover 
$\tilde{H}$ of
$H_2$ either covers $H_3$ itself or has a double cover which covers $H_3$.
The lifts of $\underw$
in this cover are contained in the lifts of $\phi_2(\underw)$.
Thus if $\phi_2(\underw)$ were virtually
geometric, then $\underw$ would be virtually geometric.

For $n>3$, we let $\phi_n\co F_3\to F_n$ be any embedding of $F_3$ as
a free factor of $F_n$.
To see why $\phi_n(\underw)$ is non-virtually geometric, it is easiest to argue topologically.
Let $H_3$ be the handlebody of genus $3$; we can obtain a handlebody $H_n$
of genus $n$ by attaching $n-3$ one-handles along $2n-6$ disjoint
disks in the boundary of $H_3$.  
Realize $\underw$ as an embedded
$1$--manifold $N$ in $H_3\subset H_n$.  Suppose there is some finite cover
$\tilde{H}\stackrel{p}{\to} H_n$ in which the preimage of $N$ is
homotopic to an embedded submanifold of the boundary.  
There is a corresponding finite cover $\tilde{H}'\stackrel{p'}{\to}H_3$
so $\tilde{H}'$ embeds in $\tilde{H}$.  There is also an infinite-sheeted cover $\tilde{H}''\stackrel{p''}{\to}\tilde{H}$ corresponding
to $\pi_1(\tilde{H}')\subset\pi_1(\tilde{H})$; the space $\tilde{H}''$ can be obtained from $\tilde{H}'$ by attaching
finitely many non-compact $3$--manifolds with boundary $A_1,\ldots,A_l$
to $\tilde{H}'$
along disks.  Each of the $A_i$ is homeomorphic to a ball with a
Cantor set removed from its boundary.  
\cd{
 & \tilde{H}''\ar@{=}[r]\ar[d]^{p''} & \tilde{H}'\cup\bigcup_iA_i\\
\tilde{H}'\ar[ur]\ar[r]\ar[d]^{p'} & \tilde{H}\ar[d]^p & \\
H_3 \ar[r] & H_n &
}%
Notice that 
${p'}^{-1}(N)\subseteq {(p \circ p'')}^{-1}(N)$ coincides with the union of the
compact components of ${(p \circ p'')}^{-1}(N)$.   If $p^{-1}(N)$ is
homotopic to an embedding in the boundary of $\tilde{H}$, then
${(p \circ p'')}^{-1}(N)$ is homotopic to an embedding in the boundary of
$\tilde{H''}$.  Since $\tilde{H}'' = \tilde{H}'\cup\bigcup_iA_i$, this embedding is
homotopic to an embedding in the boundary of $\tilde{H}'$, which
covers $H_3$.  This
contradicts the assumption that $\underw$ is not virtually
geometric in $F_3$.
\end{proof}
\begin{remark}
  Gordon and Wilton's motivation for asking Question~\ref{q:main} was
  the question of which doubles of free groups contain closed surface
  subgroups.  The double $D_3(\{baabccACBBCA\})$ contains a
  surface subgroup, by an application of the main theorem of
  \cite{calegari:surface}.  The double $D_3(\{bbaaccabc\})$ contains a
  closed surface subgroup by an explicit construction of Sang-hyun Kim
  \cite{Kim09}.
\end{remark}

\section{Acknowledgments}
The author thanks Danny Calegari for pointing out this problem,
Danny Calegari, Daryl Cooper, and Richard Schwartz for
useful conversations, and 
Nathan Dunfield for a python script which was useful for running
computer experiments with John Berge's program Heegaard.  Thanks also to
John Berge for pointing out a bad typo in an earlier version of this
note, and thanks to the anonymous referee for useful comments. 

The author was visiting 
the Caltech mathematics department
while this work was done, and
thanks Caltech for their hospitality.
This work was partly supported by the National Science Foundation,
grant DMS-0804369. 

\small

\end{document}